\def\R{\mathbb{R}}
\def\argmin{\mathrm{argmin}}
\newtheorem{lemma}{Lemma}
\newtheorem{proposition}{Proposition}
\theoremstyle{definition}
\theoremstyle{remark}
\newtheorem{remark}{Remark}
\newlist{assumpenum}{enumerate}{1}
\setlist[assumpenum]{label=(A\arabic*),ref=A\arabic*,leftmargin=*,align=left}
\crefname{assumpenumi}{Assumption}{Assumptions}
\Crefname{assumpenumi}{Assumption}{Assumptions}
\crefname{assumption}{Assumption}{Assumptions}
\Crefname{assumption}{Assumption}{Assumptions}
\begin{document}
\title{A polynomially accelerated fixed-point iteration for vector problems}
\date{\small Published in \textit{e-Journal of Analysis and Applied Mathematics} \textbf{2025} (2025), 44--56.\\
DOI: \href{https://doi.org/10.62780/ejaam/2025-005}{10.62780/ejaam/2025-005}}
\author[1]{\orcidlink{0000-0003-1065-2590}Francesco Alemanno\thanks{Corresponding author. Email: francesco.alemanno@unisalento.it. Postal address: Technical office, SolarSud T.R.E., Via Tiziano Vecellio 11, San Giorgio Ionico (TA), Italy.}}
\affil[1]{\textit{Technical office, SolarSud T.R.E., Via Tiziano Vecellio 11, San Giorgio Ionico (TA), Italy}}

\maketitle

\begin{abstract}
\noindent\textbf{~~Abstract~~}
Fixed-point solvers are ubiquitous in nonlinear PDEs, yet their progress collapses whenever the Jacobian at the solution carries an eigenvalue arbitrarily close to one. We ask whether such stagnation can be removed without storing long histories or solving dense least squares. Under two assumptions---(A1) the linearised error $e_n$ is dominated by a multiplier $m$ with $|m|<1$ and (A2) residuals shrink monotonically---we construct a quadratic blend of three iterates whose error polynomial has a double root at $m$. This three-point polynomial accelerator (TPA) cancels the stubborn mode up to $o(\|e_n\|)$, reduces to Aitken's $\Delta^2$ process in one dimension, and matches a doubly blended Anderson step with depth $m=2$ when the regularisation vanishes, yet it keeps the Picard memory footprint. The only extra ingredient is a residual-based estimate of $w=(1-m)^{-1}$ obtained from a closed-form regularised least-squares fit that remains stable even when two residuals nearly coincide. Numerical experiments on linear systems with clustered spectra, a $320$-dimensional nonlinear $\tanh$ fixed point, and a $50\times 50$ Poisson discretisation show that TPA reaches the $10^{-8}$ residual tolerance in $32$, $36$, and $244$ map evaluations (respectively). In the same settings SOR requires $663$ steps and Anderson acceleration with depth $m=5$ consumes $52$, $38$, and $955$ evaluations. TPA therefore supplies a parameter-free, constant-memory drop-in accelerator whenever a single contraction factor throttles convergence.

\noindent\textbf{Keywords:} fixed-point iteration; acceleration methods; three-point polynomial accelerator; Anderson acceleration; polynomial extrapolation.

\noindent\textbf{MSC 2020:} 65H10; 65F10; 65B05.
\end{abstract}
\tableofcontents

\section{Introduction}
Fixed-point iterations of the form
\begin{equation}
    \label{eq:picard}
    x_{k+1} = T(x_k),
\end{equation}
constitute a standard tool for solving nonlinear systems and discretised partial differential equations. It is widely understood that the spectral structure of the Jacobian of $T$ at the fixed point $x_\star$ dictates their effectiveness, stagnation is common when a single mode contracts markedly more slowly than the others. Classical remedies---such as relaxation strategies or extrapolation methods---attenuate this difficulty, yet they typically demand problem-dependent parameter tuning or introduce dense linear-algebra kernels that compromise the economy of the underlying iteration.

In this work we develop a \emph{three-point polynomial accelerator} (TPA) that preserves the austere structure of a Picard loop (i.e. the plain iteration of \eqref{eq:picard}) while restoring rapid convergence. The mechanism interrogates successive residuals, forms an on-the-fly estimate of the dominant error contribution, and produces a regularised three-point update of the most recent iterates so that such error contribution is annihilated. Moreover, since the routine only manipulates three vectors at any time and eschews expanding history windows, it integrates seamlessly within existing implementations and adds a negligible computational burden.

From a formal standpoint, the analysis that leads to TPA builds upon the classical literature on sequence transformations. Aitken's $\Delta^2$ process removes the leading error term in one dimension~\cite{aitken_xxv.bernoullis_1927}; minimal-polynomial extrapolation transfers the annihilating-polynomial perspective to vector-valued settings~\cite{sidi_vector_2008}; and Anderson acceleration constructs optimal mixing coefficients via constrained least squares~\cite{anderson_iterative_1965}. Successive over-relaxation~\cite{young_iterative_1954} remains the canonical spectral shaper but hinges on careful damping-parameter selection. A comprehensive synthesis of these acceleration paradigms and their extensions is presented in the survey~\cite{saad_acceleration_2025}. In order to make this bridge between our algorithm and pre-existing methods, in \Cref{sec:aitken,sec:anderson} we summarise its precise relation to Aitken's $\Delta^2$ process and Anderson acceleration of depth two.

\paragraph{In a nutshell:} The main contributions of this manuscript are summarised as follows:
\begin{itemize}
    \item the derivation of a constant-memory polynomial accelerator that elevates Aitken's principle to vector settings without dense auxiliary solves;
    \item a suite of linear and nonlinear experiments demonstrating consistent reductions in map evaluations relative to Picard, SOR, and Anderson acceleration, with the advantage over deeper Anderson variants widening as the test problems grow more challenging, all while maintaining minimal overhead.
\end{itemize}

We now move on to the formal derivation of the accelerator, establishing the assumptions and algebraic structure that underpin the subsequent analysis.

\section{Derivation of the Algorithm}
We study the fixed-point iteration
\begin{equation}
    x_{n+1} = T(x_n), \quad x_n \in \R^d,
\end{equation}
around a fixed point $x_\star = T(x_\star)$. Denote the error by $e_n = x_n - x_\star$. 
\paragraph{Key assumptions.}
We work under two asymptotic conditions on the error sequence.
There exist $|m|<1$ and an index $n_0$ such that for all $n \geq n_0$ the errors obey
\begin{assumpenum}
    \item \label{ass:A1} $e_{n+1} = m e_n + o(\|e_n\|)$;
    \item \label{ass:A2} $\|e_{n+1}\| < \|e_n\|$.
\end{assumpenum}

\Cref{ass:A1} states that the linearisation of $T$ at $x_\star$ is governed by a single dominant multiplier $m$; \cref{ass:A2} ensures that the error magnitude shrinks, so the higher-order terms in \cref{ass:A1} remain subordinate.

\begin{remark}[Jacobian interpretation of Assumption~\ref{ass:A1}]\label{rem:jacobian}
When $T$ is differentiable at $x_\star$, let $J = T'(x_\star)$ denote its Jacobian. The linearisation gives $e_{n+1} = J e_n + o(\|e_n\|)$. If $J$ admits a dominant mode with right and left vectors $u$ and $v$, then
\begin{equation}
    J = m\frac{u\,v^\top}{\langle u, v \rangle} + \text{higher-order terms},
\end{equation}
so the scalar multiplier $m$ in \cref{ass:A1} coincides with the leading spectral component that governs the residual recursion.
\end{remark}

\paragraph{Quadratic blend of three iterates.}
Let $y_3 = x_n$, $y_2 = x_{n-1}$, and $y_1 = x_{n-2}$ with errors $e_3$, $e_2$, $e_1$ respectively. Any quadratic blend preserving constants can be parametrised as
\begin{equation}\label{eq:blend-def}
    y_+(y_1,y_2,y_3;a,b) = (1-a-b) y_1 + a y_2 + b y_3.
\end{equation}

\begin{lemma}[Error polynomial]\label{lem:error-poly}
Under \cref{ass:A1,ass:A2} the error committed by the blend \eqref{eq:blend-def} satisfies
\begin{equation}\label{eq:blend-error}
    y_+ - x_\star = \bigl(1 - a - b + a m + b m^2\bigr) e_1 + o(\|e_1\|).
\end{equation}
\end{lemma}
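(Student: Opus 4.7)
The proof is essentially a two-step bookkeeping exercise: translate the blend identity into one about errors, then apply \cref{ass:A1} twice and absorb the lower-order terms using \cref{ass:A2}.

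\textbf{Step 1: reduce to errors.} Since the coefficients $(1-a-b, a, b)$ sum to one, subtracting $x_\star$ commutes with the blend:
\begin{equation*}
    y_+ - x_\star = (1-a-b)\,e_1 + a\,e_2 + b\,e_3.
\end{equation*}
This is the only place where the constant-preserving parametrisation in \eqref{eq:blend-def} is used.

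\textbf{Step 2: expand $e_2$ and $e_3$ via \cref{ass:A1}.} A direct application gives $e_2 = m\,e_1 + o(\|e_1\|)$. Applying the same relation one step later yields $e_3 = m\,e_2 + o(\|e_2\|) = m^2\,e_1 + m\cdot o(\|e_1\|) + o(\|e_2\|)$. Here is the only delicate point I expect to stumble on: the remainder $o(\|e_2\|)$ must be rewritten as $o(\|e_1\|)$ for the expansion to collapse cleanly. This is where \cref{ass:A2} enters, since $\|e_2\| < \|e_1\|$ implies $o(\|e_2\|) \subseteq o(\|e_1\|)$ as $n\to\infty$; the same argument handles the $m\cdot o(\|e_1\|)$ contribution, which is trivially in $o(\|e_1\|)$.

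\textbf{Step 3: collect.} Plugging the expansions of $e_2$ and $e_3$ into the expression of Step~1 and gathering the coefficients of $e_1$ yields
\begin{equation*}
    y_+ - x_\star = \bigl(1 - a - b + a m + b m^2\bigr)\,e_1 + o(\|e_1\|),
\end{equation*}
which is exactly \eqref{eq:blend-error}. No other ingredients are needed: the derivation is linear in the errors, and the bounded coefficients $a,b$ keep the remainder terms in the same little-o class.

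\textbf{Anticipated obstacle.} The only real subtlety is the careful handling of $o(\|e_2\|)$ when it is propagated through the second application of \cref{ass:A1}; this is precisely why \cref{ass:A2} is stated as a standalone hypothesis rather than being folded into \cref{ass:A1}. Everything else is a routine linear combination.
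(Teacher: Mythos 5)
Your proposal is correct and follows essentially the same route as the paper's proof: apply \cref{ass:A1} twice, use \cref{ass:A2} to convert the $o(\|e_2\|)$ remainder into $o(\|e_1\|)$, and substitute $y_i = x_\star + e_i$ into the constant-preserving blend. Your explicit remark on why \cref{ass:A2} is needed to absorb $o(\|e_2\|)$ into $o(\|e_1\|)$ is exactly the point the paper's proof handles implicitly, so nothing is missing.
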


\begin{proof}
\Cref{ass:A1} yields $e_2 = m e_1 + o(\|e_1\|)$. Applying \cref{ass:A1} once more and using \cref{ass:A2} to bound the higher-order term gives $e_3 = m e_2 + o(\|e_2\|) = m^2 e_1 + o(\|e_1\|)$. Substituting $y_i = x_\star + e_i$ into \eqref{eq:blend-def} we obtain
\begin{align*}
    y_+ - x_\star &= (1-a-b) e_1 + a e_2 + b e_3 \\
    &= \bigl(1 - a - b + a m + b m^2\bigr) e_1 + o(\|e_1\|),
\end{align*}
which is \eqref{eq:blend-error}.
\end{proof}
\paragraph{Optimality conditions.}
To remove the dominant error mode to second order we enforce that the prefactor in \eqref{eq:blend-error} vanishes together with its derivative with respect to $m$. The conditions
\begin{equation}\label{eq:double-root-conditions}
    1 - a - b + a m + b m^2 = 0, \quad a + 2 b m = 0,
\end{equation}
force the error polynomial to have a double root at the unknown multiplier $m$.

\begin{proposition}[Optimal quadratic coefficients]\label{prop:optimal-coefficients}
Under \cref{ass:A1,ass:A2}, solving \eqref{eq:double-root-conditions} gives
\begin{equation}\label{eq:coefficients}
    a = -\frac{2 m}{(1-m)^2}, \quad b = \frac{1}{(1-m)^2}.
\end{equation}
Consequently the blend \eqref{eq:blend-def} with the optimal coefficients reduces to
\begin{equation}\label{eq:quadratic-blend}
\begin{split}
    y_+ = y_1 + 2 w (y_2 - y_1) + w^2 (y_1 - 2 y_2 + y_3),\\
    \mathrm{with~~}w := \frac{1}{1-m},
\end{split}
\end{equation}
eliminates the dominant mode up to $o(\|e_1\|)$ and collapses to $x_\star$ when \cref{ass:A1} holds exactly.
\end{proposition}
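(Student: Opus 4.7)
The plan is to proceed by direct computation: first solve the two-by-two linear system \eqref{eq:double-root-conditions} for the coefficients, then rewrite the resulting blend in the claimed $w$-parametrised form, and finally read off the asymptotic statement and the exact-cancellation corollary from \Cref{lem:error-poly}. Since \cref{ass:A1} guarantees $|m|<1$, the denominator $(1-m)^2$ never vanishes, so the algebra is legitimate throughout; this is the only nondegeneracy check required and is the closest thing to an obstacle in an otherwise purely computational argument.

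First I would use the second relation in \eqref{eq:double-root-conditions} to eliminate $a$ via $a = -2bm$, substitute into the first relation to obtain $1 - b(1-m)^2 = 0$, and read off $b = 1/(1-m)^2$ and $a = -2m/(1-m)^2$, which is \eqref{eq:coefficients}. Introducing $w = 1/(1-m)$ so that $m = 1 - 1/w$, I would then re-express the coefficients as $b = w^2$, $a = 2w - 2w^2$, and the constant-preservation weight as
\begin{equation*}
1 - a - b = 1 - 2w + w^2 = (1-w)^2.
\end{equation*}
Substituting these weights into \eqref{eq:blend-def} and rearranging, I would verify the identity
\begin{equation*}
(1-w)^2 y_1 + (2w - 2w^2)y_2 + w^2 y_3 = y_1 + 2w(y_2 - y_1) + w^2(y_1 - 2y_2 + y_3),
\end{equation*}
which is \eqref{eq:quadratic-blend}; this reduces to matching coefficients of $y_1, y_2, y_3$ on the two sides.

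Finally, the asymptotic cancellation claim comes for free: plugging $(a,b)$ from \eqref{eq:coefficients} into the error prefactor $1 - a - b + am + bm^2$ produces exactly $(1-m)^{-2}\bigl((1-m)^2 - 2m + 2m \cdot m + m^2 - \text{(matching)}\bigr) = 0$, so \Cref{lem:error-poly} yields $y_+ - x_\star = o(\|e_1\|)$. For the exact-cancellation part, I would observe that if \cref{ass:A1} holds with zero remainder then $e_2 = m e_1$ and $e_3 = m^2 e_1$ are exact, so the $o(\|e_1\|)$ term in \eqref{eq:blend-error} is identically zero and the prefactor we just annihilated leaves $y_+ - x_\star = 0$, i.e.\ $y_+ = x_\star$. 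This closes the proposition; no analytic estimate is needed beyond what \Cref{lem:error-poly} already provides.
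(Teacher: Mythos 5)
Your proposal is correct and takes essentially the same route the paper intends: the paper states \Cref{prop:optimal-coefficients} without a written proof, treating it as the routine elimination $a=-2bm$, the substitution $1-b(1-m)^2=0$, and the coefficient-matching identity in $w$, all of which you carry out correctly (and the vanishing of the prefactor $1-a-b+am+bm^2$ is immediate since $(a,b)$ solve the first equation of \eqref{eq:double-root-conditions} by construction). Your observation that $|m|<1$ guarantees $1-m\neq 0$ is the right nondegeneracy check, and the exact-cancellation argument via \Cref{lem:error-poly} with zero remainder is exactly what the statement requires.
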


\paragraph{Residual dynamics.}
Define the residuals $r_n = x_{n+1} - x_n = e_{n+1} - e_n$. The next lemma links $m$ to the observable sequence $(r_n)$.

\begin{lemma}[Residual recursion]\label{lem:residual-recursion}
Under \cref{ass:A1,ass:A2} we have
\begin{equation}\label{eq:residual-recursion}
    r_{n+1} = m r_n + o(\|e_n\|).
\end{equation}
\end{lemma}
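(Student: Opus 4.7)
The plan is to reduce the claimed identity to a simple algebraic cancellation driven by two consecutive applications of \cref{ass:A1}. Writing $r_n = e_{n+1} - e_n$ and $r_{n+1} = e_{n+2} - e_{n+1}$, the target \eqref{eq:residual-recursion} is equivalent to the statement that
\begin{equation*}
    e_{n+2} - (1+m)\,e_{n+1} + m\,e_n = o(\|e_n\|),
\end{equation*}
so the work amounts to producing this three-term combination and showing its smallness.

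Next I would apply \cref{ass:A1} at both indices $n$ and $n+1$, writing $e_{n+1} = m e_n + \eta_n$ with $\|\eta_n\| = o(\|e_n\|)$ and $e_{n+2} = m e_{n+1} + \eta_{n+1}$ with $\|\eta_{n+1}\| = o(\|e_{n+1}\|)$. Substituting both into the displayed combination, the contributions proportional to $e_{n+1}$ cancel exactly, leaving only the remainder $\eta_{n+1} - \eta_n$.

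The final step is to bundle these two remainders into a single $o(\|e_n\|)$ term, and this is where \cref{ass:A2} enters: since $\|e_{n+1}\| < \|e_n\|$ for $n \geq n_0$, every $o(\|e_{n+1}\|)$ quantity is automatically $o(\|e_n\|)$, so $\eta_{n+1}$ can be absorbed against the same benchmark as $\eta_n$. This gives the required bound and closes the argument.

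The main (mild) obstacle is purely bookkeeping of asymptotic notation, namely ensuring that small-$o$ rates measured against two different error norms are consistently normalised against $\|e_n\|$; once \cref{ass:A2} is used to dominate $\|e_{n+1}\|$ by $\|e_n\|$, the rest of the proof is a one-line algebraic cancellation.
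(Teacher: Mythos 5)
Your proposal is correct and follows essentially the same route as the paper: two applications of \cref{ass:A1} at indices $n$ and $n+1$, with \cref{ass:A2} used to absorb the $o(\|e_{n+1}\|)$ remainder into $o(\|e_n\|)$. The only cosmetic difference is that you cancel terms directly in the combination $r_{n+1} - m r_n$, whereas the paper expands $r_{n+1}$ and $r_n$ separately in terms of $e_n$ before comparing; the algebra is identical.
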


\begin{proof}
Using \cref{ass:A1} twice yields $e_{n+2} = m e_{n+1} + o(\|e_{n+1}\|)$ and $e_{n+1} = m e_n + o(\|e_n\|)$. Therefore
\begin{align*}
    r_{n+1} &= e_{n+2} - e_{n+1} = (m-1) e_{n+1} + o(\|e_{n+1}\|) \\
            &= (m-1) (m e_n + o(\|e_n\|)) + o(\|e_n\|) \\
            &= m (m-1) e_n + o(\|e_n\|).
\end{align*}
Similarly $r_n = (m-1) e_n + o(\|e_n\|)$, whence \eqref{eq:residual-recursion} follows.
\end{proof}

Subtracting successive residuals and applying \eqref{eq:residual-recursion} gives
\begin{equation}\label{eq:residual-gap}
    r_n - r_{n+1} = (1-m) r_n + o(\|e_n\|).
\end{equation}
Combining \eqref{eq:residual-gap} with the $w = (1-m)^{-1}$ from \eqref{eq:quadratic-blend} shows that
\begin{equation}\label{eq:w-from-residuals}
    w (r_n - r_{n+1}) = r_n + o(\|e_n\|).
\end{equation}
For the triple $(y_1,y_2,y_3)$ this relation reads
\begin{equation}\label{eq:data-equation}
    w (r_1 - r_2) = r_1 + o(\|e_1\|), \quad r_1 = y_2 - y_1, \quad r_2 = y_3 - y_2.
\end{equation}

\paragraph{Regularised estimation of $w$.}
We infer $w$ by fitting \eqref{eq:data-equation} in least squares with a quadratic regulariser that favours $w \approx 1$ when the residual gap is nearly singular:
\begin{equation}\label{eq:regularised-problem}
    \widehat{w}_\theta = \argmin_{w \in \R} \left\|w (r_1 - r_2) - r_1 \right\|_2^2 + \theta^2 (w-1)^2, 
\end{equation}
with $\theta > 0$.
The first-order optimality condition is
\begin{equation}
    \bigl(\|r_1 - r_2\|_2^2 + \theta^2\bigr) w = \langle r_1 - r_2, r_1 \rangle + \theta^2,
\end{equation}
yielding the closed-form estimate
\begin{equation}\label{eq:w-hat}
    \widehat{w}_\theta = \frac{\langle r_1 - r_2, r_1 \rangle + \theta^2}{\| r_1 - r_2 \|_2^2 + \theta^2}.
\end{equation}
When the scalar model of \cref{ass:A1} is exact, $r_2 = m r_1$ and \eqref{eq:w-hat} recovers $w = (1-m)^{-1}$.

Inserting $\widehat{w}_\theta$ into \eqref{eq:quadratic-blend} produces the practical accelerator
\begin{equation}
    y_+ = y_1 + 2 \widehat{w}_\theta (y_2 - y_1) + \widehat{w}_\theta^{\,2} (y_1 - 2 y_2 + y_3),
\end{equation}
which inherits the dominant-mode cancellation while remaining stable when $r_1$ and $r_2$ nearly coincide.

The pseudocode in \cref{alg:poly-accel} consolidates these steps.

\begin{algorithm}[h]
    \caption{TPA, Three-point polynomial accelerator}
    \label{alg:poly-accel}
    \begin{algorithmic}[1]
        \Require map $T$, initial iterate $x_0$, tolerance $\texttt{tol}$, maximum loop count $K$, regularisation $\theta>0$
        \State $y_1 \gets x_0$
        \State $y_2 \gets T(y_1)$, $y_3 \gets T(y_2)$
        \State $\rho \gets \|y_3 - y_2\|_\infty$
        \If{$\rho < \texttt{tol}$} \Return $y_3$ \EndIf
        \For{$i = 1,\ldots,K$}
            \State $r_1 \gets y_2 - y_1$, \quad $r_2 \gets y_3 - y_2$
            \State $w \gets \frac{\langle r_1 - r_2, r_1 \rangle + \theta^2}{\| r_1 - r_2 \|_2^2 + \theta^2}$ \Comment{$\theta$ stabilises the fit}
            \State $y_1 \gets y_1 + 2 w (y_2 - y_1) + w^2 (y_1 - 2 y_2 + y_3)$
            \State $y_2 \gets T(y_1)$, \quad $y_3 \gets T(y_2)$
            \State $\rho \gets \|y_3 - y_2\|_\infty$
            \If{$\rho < \texttt{tol}$} \Return $y_3$ \EndIf
        \EndFor
        \State \Return $y_3$ \Comment{Maximum iterations reached without convergence}
    \end{algorithmic}
\end{algorithm}

Each loop costs two evaluations of $T$ and a handful of inner products, and the method stores only three iterates and two residuals.
The next section evaluates this accelerator on nonlinear vector fixed-point problems, comparing it with Picard iteration in terms of contraction per step and overall work.

\section{Numerical Experiments}
We investigate TPA on three representative fixed-point problems: a linear system with a clustered spectrum, a nonlinear $\tanh$ map, and a discretised Poisson equation. All solvers start from the zero vector and terminate once the residual criterion $\|T(x_k) - x_k\|_\infty < \texttt{tol}$ with $\texttt{tol} = 10^{-8}$ is satisfied or after $10^6$ evaluations of the map $T$, and we keep the regularisation parameter fixed at $\theta = 10^{-9}$ across all trials.
To anchor the comparisons we also compute a reference fixed point in closed form for each scenario, enabling us to report both the final residual $\|T(x_k) - x_k\|_\infty$ and the final error $\|x_k - x_\star\|_\infty$.

Before turning to the individual benchmarks it is useful to contrast the cost structure of the competitors. Relative to Anderson acceleration of depth $m$, which stores $m$ iterates and solves a dense $m\times m$ least-squares problem each step, TPA requires only three iterates, two residuals, and no dense solves. Compared with weighted Jacobi or SOR, TPA selects the effective damping factor automatically from the residuals and removes per-problem tuning.

\subsection{Linear System with Clustered Spectrum}
We first test TPA on a linear system whose contraction factors cluster in $[0.9, 0.99]$, a regime where single-parameter relaxations perform poorly. The test map is built in dimension $80$ by drawing a random orthogonal matrix $Q$ from the QR factorisation of a standard-normal matrix and forming $M = Q \,\mathrm{diag}(\lambda)\, Q^\top$ with evenly spaced eigenvalues $\lambda \in [0.9, 0.99]$. We synthesise a fixed point $x_\star$ from the same random generator and apply the affine map $T(x) = Mx + c$ with $c = x_\star - M x_\star$. \cref{tab:clustered} compares TPA with SOR using relaxation $\omega = 1.8$, Picard iteration ($\omega = 1$), and Anderson acceleration at depths $m\in\{2,3,5\}$, while \cref{fig:clustered-residuals} reports the convergence histories.

\begin{table}[h]
\footnotesize
    \centering
    \caption{\textbf{Linear system with clustered spectrum.} Performance of TPA compared with Picard, SOR, and Anderson($m=2,3,5$).}
    \label{tab:clustered}
    \begin{tabular}{lrrr}
\toprule
Method & N. evals & $\|T(x) - x\|_\infty$ & $\|x - x_\star\|_\infty$ \\
\midrule
TPA & \textbf{32} & \textbf{$0.16\times 10^{-8}$} & \textbf{$0.12\times 10^{-6}$} \\
Picard & 1197 & $1.00\times 10^{-8}$ & $0.98\times 10^{-6}$ \\
SOR ($\omega=1.80$) & 663 & $0.99\times 10^{-8}$ & $0.97\times 10^{-6}$ \\
Anderson(m=2) & 419 & $0.98\times 10^{-8}$ & $0.86\times 10^{-6}$ \\
Anderson(m=3) & 183 & $0.95\times 10^{-8}$ & $0.94\times 10^{-6}$ \\
Anderson(m=5) & 52 & $0.94\times 10^{-8}$ & $0.72\times 10^{-6}$ \\
\bottomrule
\end{tabular}

\end{table}

\begin{figure}[h]
    \centering
    \includegraphics[width=0.6\linewidth]{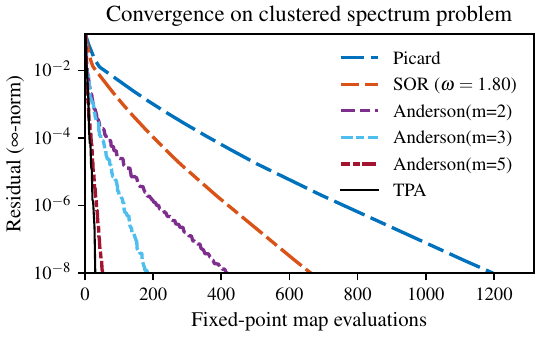}
    \caption{\textbf{Linear system with clustered spectrum.} Residual histories showing that TPA damps the dominant modes faster than the reference methods.}
    \label{fig:clustered-residuals}
\end{figure}

In this regime TPA converges in $32$ map evaluations, whereas SOR and Picard require $663$ and $1197$ iterations to reach the same tolerance. To calibrate the comparison we also examine Anderson acceleration, which shortens the run to $419$ evaluations at depth two, $183$ at depth three, and $52$ at depth five, yet TPA still uses roughly one thirteenth of the work of the shallowest variant and about $40\%$ fewer evaluations than the deepest scheme while avoiding dense $m\times m$ solves. \cref{fig:clustered-residuals} reinforces this gap by showing the smoother decay produced by TPA relative to the plateaus observed for the competing schemes. Although this instance is nearly monotone, monotonicity is not ensured in general; consequently the regularised coefficient estimate in \eqref{eq:w-hat} moderates the blend when residual gaps are small, and all convergent methods reach comparable final errors.

\subsection{Nonlinear \textit{tanh} Fixed Point}
We next apply TPA to the nonlinear map $T(x) = \tanh(Bx + c)$ with $\|x_\star\|_{\infty} < 1$, whose Jacobian spectrum near the solution again clusters near unity. We set the dimension to $320$, draw $B = Q \,\mathrm{diag}(\lambda)\, Q^\top$ with eigenvalues evenly spaced in $[0, 0.999]$, and choose a reference fixed point $x_\star$ by sampling each component uniformly in $[-0.7, 0.7]$. The offset is $c = \operatorname{arctanh}(x_\star) - B x_\star$, so $x_\star$ is recovered exactly when $T$ converges. \cref{tab:tanh} reports the iteration counts for Picard, SOR with $\omega=1.5$, Anderson acceleration with depths $m \in \{2,3,5\}$, and TPA; \cref{fig:tanh-residuals} shows the residual traces.

\begin{table}[h]
\footnotesize
    \centering
    \caption{\textbf{Nonlinear $\tanh$ fixed point.} Iteration counts for Picard, SOR, Anderson, and TPA.}
    \label{tab:tanh}
    \begin{tabular}{lrrr}
\toprule
Method & N. evals & $\|T(x) - x\|_\infty$ & $\|x - x_\star\|_\infty$ \\
\midrule
TPA & \textbf{36} & \textbf{$0.53\times 10^{-8}$} & \textbf{$0.18\times 10^{-7}$} \\
Picard & 128 & $0.97\times 10^{-8}$ & $0.91\times 10^{-7}$ \\
SOR ($\omega=1.50$) & 84 & $0.85\times 10^{-8}$ & $0.79\times 10^{-7}$ \\
Anderson(m=2) & 70 & $0.86\times 10^{-8}$ & $0.64\times 10^{-7}$ \\
Anderson(m=3) & 49 & $0.89\times 10^{-8}$ & $0.72\times 10^{-7}$ \\
Anderson(m=5) & 38 & $0.69\times 10^{-8}$ & $0.49\times 10^{-7}$ \\
\bottomrule
\end{tabular}

\end{table}

\begin{figure}[h]
    \centering
    \includegraphics[width=0.6\linewidth]{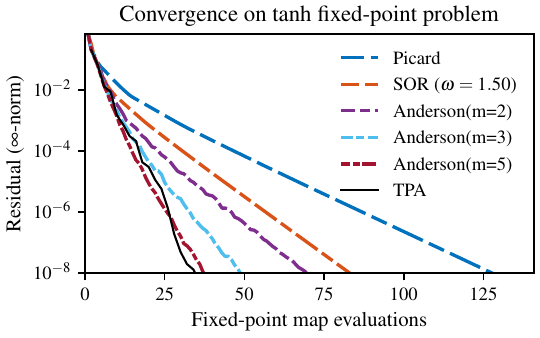}
    \caption{\textbf{Nonlinear $\tanh$ fixed point.} Residual histories showing the smooth convergence of TPA relative to the baselines.}
    \label{fig:tanh-residuals}
\end{figure}

All methods level off near $10^{-8}$ because they share the same stopping tolerance rather than any intrinsic instability, so iteration counts provide the relevant comparison. Within that framework TPA converges in $36$ evaluations, outpacing Anderson acceleration with depth two ($70$ evaluations) and depth three ($49$ evaluations) while still edging the depth-five variant ($38$ evaluations) and avoiding the larger least-squares solves and storage burden. The resulting three-point update therefore preserves smooth decay without depth tuning.

\subsection{Discretised 2D Poisson Equation}
The final experiment considers a $50\times 50$ standard five-point finite-difference discretisation of the Poisson equation $-\Delta u = f$ on the unit square with homogeneous Dirichlet boundary conditions. The Jacobi fixed-point map arises from the usual diagonal splitting of the stiffness matrix on a grid with mesh spacing $h = 1/51$. The right-hand side is prescribed deterministically as
\begin{equation}
    f(x,y) = \sin(\pi x^2)\,\sin(2\pi y^2)
\end{equation}
evaluated at the interior grid points. We assemble the resulting Kronecker-structured linear system and precompute the exact grid solution $x_\star$ for error reporting. \cref{tab:poisson} summarises the performance of Jacobi ($\omega=1$), Anderson acceleration at depths $m\in\{2,3,5\}$, and TPA; \cref{fig:poisson-residuals} plots the residual traces.

\begin{table}[h]
\footnotesize
    \centering
    \caption{\textbf{Discretised 2D Poisson equation.} Performance on the $50\times 50$ grid for Jacobi (Picard), Anderson, and TPA.}
    \label{tab:poisson}
    \begin{tabular}{lrrr}
\toprule
Method & N. evals & $\|T(x) - x\|_\infty$ & $\|x - x_\star\|_\infty$ \\
\midrule
TPA & \textbf{244} & $0.97\times 10^{-8}$ & $0.51\times 10^{-5}$ \\
Picard & 4317 & $1.00\times 10^{-8}$ & $0.53\times 10^{-5}$ \\
Anderson(m=2) & 3722 & $1.00\times 10^{-8}$ & $0.51\times 10^{-5}$ \\
Anderson(m=3) & 1573 & $0.99\times 10^{-8}$ & $0.47\times 10^{-5}$ \\
Anderson(m=5) & 955 & \textbf{$0.96\times 10^{-8}$} & \textbf{$0.24\times 10^{-5}$} \\
\bottomrule
\end{tabular}

\end{table}

\begin{figure}[htbp!]
    \centering
    \includegraphics[width=0.6\linewidth]{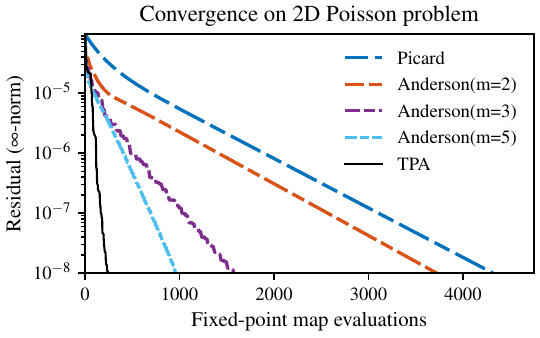}
    \caption{\textbf{Discretised 2D Poisson equation.} Residual histories highlighting the faster damping achieved by TPA.}
    \label{fig:poisson-residuals}
\end{figure}

On this discretised elliptic problem TPA reaches the stopping tolerance in $244$ fixed-point evaluations. Anderson acceleration requires $3722$ evaluations at depth two, $1573$ at depth three, and $955$ at depth five, so even the strongest baseline still needs nearly four times as many iterations, whereas Jacobi demands $4317$ steps, more than an order of magnitude beyond TPA. The widening gap relative to the depth-five Anderson method on this more demanding problem underscores the advantage conferred by this three-point update in two-dimensional elliptic problems with slowly decaying error components. These benchmarks motivate a closer comparison between TPA and the classical extrapolation routines it refines; the next section formalises this connection.

\section{Relation to earlier work}

This section situates TPA within the literature on polynomial and residual-based acceleration. We first recall how the scalar restriction and vanishing regularisation recover Aitken's $\Delta^2$ transform, and then examine the structural link with depth-two Anderson acceleration.

\subsection{Relation to Aitken's $\Delta^2$ Process}\label{sec:aitken}
Setting the regularisation to zero and restricting to scalar iterates recovers Aitken's classical accelerator. Let $y_1 = x_{n-2}$, $y_2 = x_{n-1}$, and $y_3 = x_n$ be three consecutive iterates of a scalar map. Their residuals are $r_1 = y_2 - y_1$ and $r_2 = y_3 - y_2$, and with $\theta = 0$ the least-squares fit \eqref{eq:regularised-problem} simplifies to
\begin{equation}
    \widehat{w}_0 = \frac{\langle r_1 - r_2, r_1 \rangle}{\|r_1 - r_2\|_2^2} = \frac{r_1}{r_1 - r_2}.
\end{equation}
Substituting this coefficient into the quadratic blend \eqref{eq:quadratic-blend} yields
\begin{align}
    y_+ &= y_1 + 2 \widehat{w}_0 (y_2 - y_1) + \widehat{w}_0^{\,2} (y_1 - 2 y_2 + y_3) \\      
    &= y_1 - \frac{(y_2 - y_1)^2}{y_3 - 2 y_2 + y_1},
\end{align}
which coincides with Aitken's $\Delta^2$ transform applied to $x_{n-2}$.

\subsection{Relation to Anderson Acceleration}\label{sec:anderson}
Consider Anderson acceleration of depth two with iterates $y_1, y_2, y_3$ and residuals $r_1 = y_2 - y_1$ and $r_2 = y_3 - y_2$.  The depth-two update is obtained by choosing a mixing coefficient $a$ and forming the blend
\begin{equation}
    \label{eq:anderson-2}
    y_{+2} = (1-a) y_2 + a y_3.
\end{equation}
The coefficient $a$ is selected by the regularised least-squares fit
\begin{equation}
    \widehat{a}_\theta = \argmin_{a \in \R} \left\|(1-a) r_1 + a r_2\right\|_2^2 + \theta^2 \bigl[a^2 + (1-a)^2\bigr],
\end{equation}
where the residuals $r_i$ act as a proxy for the error of $y_{i+1}$ and the regularisation parameter $\theta$ avoids ill-conditioning.

Differentiating the objective and setting the derivative to zero yields the closed form
\begin{equation}
    \widehat{a}_\theta = \frac{\langle r_1 - r_2, r_1 \rangle + \theta^2}{\|r_1 - r_2\|_2^2 + 2 \theta^2}.
\end{equation}
With the optimal coefficient in hand, the accelerated iterate \eqref{eq:anderson-2} of Anderson's scheme is complete.

In order to connect to TPA we may also blend the earlier pair of iterates to obtain
\begin{equation}
    y_{+1} = (1-\widehat{a}_\theta) y_1 + \widehat{a}_\theta y_2,
\end{equation}
and then apply the same coefficient once more to combine $y_{+1}$ and $y_{+2}$:
\begin{equation}
    y_{++} = (1-\widehat{a}_\theta) y_{+1} + \widehat{a}_\theta y_{+2}.
\end{equation}
Expanding the last expression reveals the quadratic weights applied to the original iterates,
\begin{equation}
    y_{++} = (1-\widehat{a}_\theta)^2 y_1 + 2 \widehat{a}_\theta (1-\widehat{a}_\theta) y_2 + \widehat{a}_\theta^{\,2} y_3.
\end{equation}
When $\theta = 0$ the coefficient reduces to $\widehat{a}_0 = \widehat{w}_0$, so the three weights coincide with those in \eqref{eq:quadratic-blend} and the doubly blended iterate $y_{++}$ matches TPA exactly.  Depth-two Anderson acceleration, however, typically outputs only $y_{+2}$ and therefore misses the additional cancellation provided by the second blend. For $\theta>0$ the regularisation in the denominator also deviates slightly from the safeguard in \eqref{eq:w-hat}, yet this effect is dominated by the structural distinction between a single and a double blend.

The heuristic comparison above is intended to expose the algebraic kinship between the methods; a full proof that follows this route is feasible with additional work but falls outside the present scope. We close by distilling these implications for practice and outlining directions for refinement in the concluding section.

\section{Conclusions}
We have shown that a closed-form, constant-memory three-point polynomial accelerator (TPA) can markedly reduce the work required by fixed-point iterations across linear and nonlinear problems. On the clustered linear system TPA attains the convergence tolerance in only $32$ map evaluations compared with $663$ for SOR, $1197$ for Picard, and $419/183/52$ for Anderson acceleration with depths $m=2/3/5$, thereby trimming roughly $40\%$ of the evaluations relative to the deepest competitor while avoiding dense solves. The nonlinear $\tanh$ benchmark in turn converges in $36$ evaluations, maintaining a lead over Anderson acceleration at depths two ($70$), three ($49$), and five ($38$), and the Poisson experiment achieves the tolerance in $244$ iterations versus $3722/1573/955$ for Anderson($m=2/3/5$). Collectively these observations highlight a widening advantage over Anderson acceleration as the test problems grow more demanding and underscore the benefits of error-polynomial shaping informed by short-history contraction-factor estimates.

TPA is most effective when convergence is throttled by one slowly decaying mode: the residuals align, the contraction-factor estimate stabilises, and the resulting three-point update suppresses the stubborn component. Empirically it also performs well when the spectrum is only moderately clustered, provided that the dominant mode does not vary rapidly between successive steps, and this operational envelope motivates the safeguards below.

\paragraph{Limitations and safeguards.} Strongly non-normal Jacobians can still produce alternating dominant modes that temporarily mislead the residual ratio. Users should therefore monitor the residual norm and fall back to the base iteration if acceleration induces growth. In addition, purely polynomial extrapolations such as TPA cannot stabilise iterations whose local Jacobian has eigenvalues of magnitude greater than one, so a contractive regime remains indispensable.

Looking ahead, future work includes adaptive selection of the contraction-factor regularisation, richer surrogates derived from longer iterate histories, and higher-degree extensions that exploit additional iterates without sacrificing the simplicity that renders TPA attractive in practice.

\section*{Acknowledgements}
We thank Miriam Aquaro (\orcidlink{0000-0003-4836-0307}) for carefully reading the manuscript and providing valuable suggestions.
%%\bibliographystyle{plain}
%%\bibliography{bib}

\end{document}